\newtheorem{thm}{Theorem}[section]
\newtheorem{cor}[thm]{Corollary}
\newtheorem{lem}[thm]{Lemma}
\newtheorem{prop}[thm]{Proposition}
\theoremstyle{definition}
\newtheorem{defn}[thm]{Definition}
\theoremstyle{remark}
\newtheorem{war}[thm]{Warning}
\numberwithin{equation}{section}
\newcommand{\PC}{\mathrm{PC}}
\newcommand{\proj}{\mathbf{Proj}}
\newcommand{\aff}{\mathbf{Aff}}
\newcommand{\Pone}{\mathbf{P}^1}
\newcommand{\PGL}{\textnormal{PGL}}
\newcommand{\PSL}{\textnormal{PSL}}
\newcommand{\SL}{\textnormal{SL}}
\newcommand{\Aut}{\textnormal{Aut}}
\newcommand{\Z}{\mathbf{Z}}
\newcommand{\R}{\mathbf{R}}
\begin{document}

\address{CNRS and Univ Lyon, Univ Claude Bernard Lyon 1, Institut Camille Jordan, 43 blvd. du 11 novembre 1918, F-69622 Villeurbanne}
\email{cornulier@math.univ-lyon1.fr}
\subjclass[2010]{Primary 57S05, 57M50, 57M60; secondary 20F65, 22F05, 53C10, 57S25}
\thanks{Supported by project ANR-16-CE40-0005 CoCoGro}




\title{Property~FW and 1-dimensional piecewise groups}
\author{Yves Cornulier}%
\date{November 22, 2020}


\begin{abstract}
Property~FW is a natural combinatorial weakening of Kazhdan's Property~T.
We prove that the group of piecewise homographic self-transfor\-mations of the real projective line, has ``few" infinite subgroups with Property~FW. In particular, 
no such subgroup is amenable or has Kazhdan's Property~T. These results are extracted from a longer paper. We provide a complete proof, whose main tools are the use of the notion of partial action and of one-dimensional geometric structures.
\end{abstract}
\maketitle

\section{Introduction}

The purpose of this note is to extract from the long paper \cite{Cmain} a strong restriction on groups of piecewise homographic self-transformations of the real projective line. We start with introducing this group, and then the rigidity properties we deal with, namely Kazhdan's Property~T and Property~FW. 

\subsection{Piecewise groups}

Let $\PC(\Pone)$ be the group of piecewise continuous ``self-transform\-ations" of the real projective line $\Pone$. Formally speaking, it consists of maps $f:\Pone\to\Pone$, continuous outside a finite subset, such that there exists another such map $g:\Pone\to\Pone$ such that $f\circ g$ and $g\circ f$ coincide with the identity map outside a finite subset. This identification essentially means that we ignore the values at discontinuity points.

We will mainly be interested in its subgroup $\PC_\proj(\Pone)$ consisting of piecewise homographic self-transformations. That is, it consists of those elements represented by a map $f:\Pone\to\Pone$ such that for all but finitely many $x\in\Pone$, there exists a neighborhood $V$ of $x$ and $g\in\PGL_2(\R)$ such that $f(x')=g(x')$ for all $x'\in V$.

It naturally contains various subgroups that have been considered by various people with various points of view:
\begin{itemize}
\item the group of piecewise affine self-transformations of $[0,1]$, and its subgroup of continuous elements. In turn, those piecewise affine groups contain many famous groups, notably Thompson's groups and several of their generalizations, and also the group $\mathrm{IET}$ of interval exchange transformation. The list of references is too long to fit here!
\item Define $G^i$ as the intersection $\PC_\proj(\Pone)\cap\mathrm{Diff}^i(\Pone)$. First observe that for $i\ge 2$, $G^i$ is reduced to $\PGL_2(\R)$. The groups $G^1\subset G^0=\PC(\Pone)\cap\mathrm{Homeo}(\Pone)$ are much larger.
The group $G^1$ was introduced by Strambach and Betten \cite{Str,Bet}; they identified it as automorphism group of a Moulton plane (some affine plane, in the sense of incidence geometry). It was further studied in \cite{BW,Gre}.
\item The intersection $G^0_\infty=\PC_\proj(\Pone)\cap\mathrm{Homeo}(\R)$ (stabilizer of $\infty\in\Pone$ in $G^0$ above) was considered by Monod \cite{Mon}, who remarkably observed that this group (and hence some of its finitely generated subgroups) is not amenable while it does not contain any free subgroup. The key argument for non-amenability is a topological notion of amenability for equivalence relations, due to Carri\`ere and Ghys \cite{CG}. Subsequently non-amenable finitely presented subgroups of $G^0_\infty$ were exhibited by Lodha and Moore~\cite{LM}.
\end{itemize}

A general question is to find restrictions about the possible structure of subgroups of $\PC_\proj(\Pone)$. Such questions have often been addressed within the various subgroups mentioned above (see, among others, the references in \cite{Cmain}). 

\subsection{Rigidity properties: Kazhdan and FW}
We will mostly be concerned with two group properties: Kazhdan's Property~T, a rigidity property about unitary representations, and one of its less known combinatorial weakenings, Property~FW. We first quickly overview these notions, before turning to the results. 

Kazhdan's Property~T was originally introduced by Kazhdan. For a countable discrete group $\Gamma$, let us use as a definition the following characterization due to Robertson and Steger \cite{RoS} ($\triangle$ denoting symmetric difference):

The countable discrete group $\Gamma$ has Property~T if\footnote{To respect established terminology, we could remove the word ``countable" here at the price of replacing ``Property~T" with ``Property FH". Nevertheless the case of $\Gamma$ uncountable is of marginal interest here.} for every measure space $(E,\mathcal{A},\mu)$ with a measure preserving $\Gamma$-action and measurable subset $X\subset E$ such that $\mu(X\triangle \gamma X)<\infty$ for every $\gamma\in\Gamma$, there exists a $\Gamma$-invariant measurable subset $X'\subset E$ such that $\mu(X\triangle X')<\infty$.

This means that the only way to satisfy the ``invariance modulo finite measure" condition is the trivial one: modifying an invariant subset on a set of finite measure. Here are two illustrating examples of actions in which such $X'$ does not exist:
\begin{itemize}
\item $\Z$ or $\R$ naturally acting on $E=\R$ by translation, $X=\R_{\ge 0}$;
\item $\PSL_2(\R)$ acting diagonally on $E'=\mathbb{P}^1\times\mathbb{P}^1$; this action is transitive on the complement $E$ of the diagonal, and preserves a Radon measure (unique up to rescaling). The space $E$ is homeomorphic to an open cylinder in which both ends have infinite measure, and $X$ can be chosen as one ``half" of it.
\end{itemize}

A natural combinatorial weakening of Property~T is when the previous condition is asked while $E$ is required to be a set endowed with the $\sigma$-algebra of all subsets and the counting measure. That is, the discrete group $\Gamma$ has Property~FW if for every $\Gamma$-set $E$, every subset $X\subset E$ such that $X\triangle \gamma X$ is finite for every $\gamma\in\Gamma$, coincides with a $\Gamma$-invariant subset modulo a finite subset. 

That $\Gamma$ has Property~FW has various geometric characterizations \cite{CFW}, among which
\begin{itemize}
\item every $\Gamma$-action on every nonempty CAT(0) cube complex by automorphisms, has a fixed point;
\item $\Gamma$ is finitely generated, and every Schreier graph $\Gamma/\Lambda$ has at most one end.
\end{itemize}

For a long time the class of countable discrete groups known to satisfy Property~T was essentially reduced to arithmetic lattices and analogues, such as $\SL_n(\Z)$, $\SL_n(\Z)\ltimes\Z^n$, or $\SL_n(\Z[1/2])$ for $n\ge 3$. Then this was considerably enlarged, in several directions. It was notably obtained, for many hyperbolic groups \cite{Zuk}, for analogues of arithmetic groups over general rings \cite{EJZ}, for automorphisms groups of free groups on $\ge 5$ generators \cite{KNO,KKN}. This list is far from comprehensive; the book \cite{BHV} surveys several of the developments until 2005.

The study of Property~FW for its own interest is much more recent \cite{CFW}. At this time, the main supply of groups with Property~FW consists of groups with Property~T. Examples of groups known to have Property~FW but not Property~T come from \cite{Cirr}; these are arithmetic lattices in products:
\begin{itemize}
\item Let $G$ be a semisimple Lie group with at least one noncompact simple factor with Property~T. Then every irreducible lattice in $G$ has Property~FW. On the other hand it does not have Property~T if $G$ does not, e.g., when $G=\mathrm{SO}(3,2)\times\mathrm{SO}(4,1)$.
\item For $G$ a semisimple Lie group of real rank $\ge 2$, conjecturally Property~FW still holds for all its irreducible lattices. The remaining case is when no noncompact simple factor has Property~T. In this case, it is nevertheless known in some significant examples. For instance, for $k\ge 2$ non-square, $\SL_2(\Z[\sqrt{k}])$ (which sits as an irreducible lattice in $\SL_2(\R)^2$) has Property~FW.
\end{itemize}
 Other examples mechanically follow: for instance using the Ollivier-Wise ``Rips machine with Kazhdan kernel" \cite{OW}, one deduces the existence of Gromov-hyperbolic groups with Property~FW but not Property~T.

The main reason for introducing using Property~FW (instead of sticking to its better known and more elaborate Property~T cousin) is therefore not, at this state of knowledge, the gain of generality. It is rather that this is exactly the assumption that is needed to run the argument, as we hope to convince the reader in the sequel.

\subsection{Results}
The purpose of this note is to prove the following theorem extracted from \cite{Cmain}.

\begin{thm}\label{proj_fewfw}
Let $\Gamma$ be an infinite subgroup of $\PC_\proj(\Pone)$ with Property~FW. Then there exist $n\ge 1$ and subgroups $W\le\Lambda\le\Gamma$, with $W$ finite normal, $\Lambda$ normal of finite index, such that $\Lambda/W$ can be embedded into $\PSL_2(\R)^n$ in a such a way that each projection $\Lambda\to\PSL_2(\R)$ has a Zariski-dense image.
\end{thm}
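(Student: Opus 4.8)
The plan is to analyze a Property~FW subgroup $\Gamma \le \PC_\proj(\Pone)$ through its induced \emph{partial action} on $\Pone$: each element of $\Gamma$ acts homographically on a cofinite subset, and the set of discontinuity points, being finite for each element, assembles into a coherent combinatorial structure. The first step is to organize this data. For each $\gamma$ there is a finite subset $F_\gamma \subset \Pone$ outside which $\gamma$ locally agrees with an element of $\PGL_2(\R)$; the key elementary observation is that the ``breakpoint set'' behaves subadditively under composition, so that $\Gamma$ acts on the set of germs, and more usefully on a countable $\Gamma$-set built from the orbits of breakpoints together with the local $\PGL_2$-data. Property~FW will then be invoked repeatedly: any $\Gamma$-set on which $\Gamma$ acts with a subset whose symmetric differences under $\Gamma$ are finite must, up to a finite subset, carry a $\Gamma$-invariant subset. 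I would use this first to show $\Gamma$ cannot genuinely ``shuffle'' infinitely many pieces — more precisely, that after passing to a finite-index subgroup $\Lambda$, the partial action is realized (off a finite invariant set) by a genuine action preserving a one-dimensional geometric structure in the sense of the cited paper \cite{Cmain}, i.e.\ $\Lambda$ acts on a compact one-manifold (a finite disjoint union of circles) by piecewise-projective maps but now \emph{without} creating new breakpoints, hence by maps that are projective on each of finitely many arcs with a globally $\Lambda$-invariant arc decomposition.

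Granting that reduction, the second step is to kill the finite ``decoration'': the stabilizer $W$ of the (finitely many) breakpoints and of the combinatorial type of the decomposition is a finite normal subgroup of $\Lambda$ (finite because it acts faithfully on a finite set of germs), and $\Lambda/W$ now acts on the pieces. Because the arc-decomposition is $\Lambda/W$-invariant and each arc carries a projective structure, one gets a homomorphism from (a further finite-index subgroup of) $\Lambda/W$ into a product $\prod_{j=1}^n \PGL_2(\R)$ indexed by the arcs in one orbit representative, via the holonomy/developing map of the projective structure on each arc; passing to the finite-index subgroup acting trivially on the set of arcs and landing in the identity component gives a map to $\PSL_2(\R)^n$. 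Injectivity mod $W$ follows because an element acting as the identity homography on every arc is, as a germ-decorated transformation, trivial. At this point one has an embedding $\Lambda/W \hookrightarrow \PSL_2(\R)^n$.

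The third step is to arrange Zariski-density of each coordinate projection. The image of $\Lambda/W$ in the $j$-th factor $\PSL_2(\R)$ is a subgroup; let $H_j$ be its Zariski closure. If some $H_j$ is a proper algebraic subgroup, then (after passing to finite index) it is contained in a connected amenable algebraic subgroup — a point stabilizer, a geodesic stabilizer, or a rotation group — so the image of $\Lambda/W$ in that factor is amenable. But an infinite Property~FW group is non-amenable (amenable FW groups are finite, since an infinite amenable group has a quotient $\Z$ or an infinite action with unbounded orbits, contradicting FW via the first of the two examples in the introduction). One then discards factors with finite image by a dimension/projection bookkeeping, reindexes, and discards or merges the remaining factors where the image is infinite but not Zariski-dense — here one must rule out the image being infinite amenable (handled above) and also rule out the image being a lattice or other Zariski-dense-but-not-all-of-$\PSL_2(\R)$ obstruction, which does not arise because Zariski density in $\PSL_2(\R)$ is exactly the negation of ``virtually contained in a proper algebraic subgroup''. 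After removing factors with finite image and noting each surviving projection is Zariski-dense, we obtain the desired $n$, $W \le \Lambda \le \Gamma$.

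The main obstacle, and the technical heart, is the first step: passing from an \emph{a priori} wild partial action (with breakpoints proliferating and arcs getting cut ever finer as one composes group elements) to a genuine action with a \emph{finite $\Lambda$-invariant} arc decomposition. This is precisely where Property~FW does the work that amenability or Property~T alone would not: one builds, from the partial action, a $\Gamma$-set $E$ of ``oriented germ-pieces'' together with a candidate subset $X$ (roughly, the pieces lying in a chosen half), checks that $X \triangle \gamma X$ is finite for all $\gamma$ because $\gamma$ only disturbs finitely many breakpoints, and concludes that $X$ agrees with a $\Gamma$-invariant subset off a finite set — iterating this to stabilize the whole decomposition. Making ``germ-piece'' precise, verifying the finiteness of symmetric differences, and extracting a genuine structure-preserving action from the FW conclusion (rather than merely an invariant subset) is the delicate part, and is exactly what \cite{Cmain} sets up via the formalism of one-dimensional geometric structures and partial actions; I would follow that formalism, importing its key structural lemmas as black boxes where the present note permits.
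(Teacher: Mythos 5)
Your overall strategy coincides with the paper's: regularize the piecewise action via the partial-action formalism and Property~FW, then read off a map to $\PSL_2(\R)^n$ and get Zariski-density from the fact that an infinite quotient of an FW group cannot be amenable (equivalently here, virtually metabelian), since proper algebraic subgroups of $\PSL_2(\R)$ are virtually metabelian. However, there are two genuine gaps in the middle of your argument. The first concerns what regularization actually produces. The universal globalization does not yield a $\Lambda$-invariant arc decomposition of a disjoint union of standard circles; it yields a \emph{new} Hausdorff finitely-charted projectively-modeled $1$-manifold $Y$ (obtained by cutting and regluing, with Hausdorffness requiring the strengthened form of FW via Neumann's lemma) on which the group acts by honest projective automorphisms. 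The components of $Y$ may be intervals, finite coverings of $\Pone$, or exotic projective circles (quotients of an open interval of $\widetilde{\Pone}$ by a cyclic subgroup of $\widetilde{\SL_2(\R)}$). Your plan to embed the automorphism group factor-by-factor ``via the holonomy/developing map of each arc'' does not engage with this: for intervals and exotic circles the projective automorphism group is only virtually metabelian, so these components must be \emph{discarded} using FW — this is exactly Lemma~\ref{large_pro}, which needs its own argument — and the surviving components are $k$-fold coverings of $\Pone$, whose oriented automorphism group is the connected covering $\PSL_2(\R)^{(k)}$ rather than $\PSL_2(\R)$, so one must still quotient by its finite center. That central quotient, together with the finite kernel of the action on the union of components with infinite image, is where $W$ genuinely comes from.

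The second gap is your construction of $W$ itself: the stabilizer of the finitely many breakpoints (and of the decomposition) is typically infinite — e.g.\ a one-parameter group of homographies fixing two points preserves both complementary arcs — and it does not act faithfully on the finite set of germs at those points; on the other hand, the kernel of the action on the arcs would be trivial, not merely finite, so neither reading produces the required finite normal subgroup. A smaller issue: your justification that an infinite amenable group fails FW (``has a quotient $\Z$ or \dots'') is not a proof, since infinite finitely generated amenable groups need not surject onto $\Z$; the fact is true but should be quoted from \cite{CFW}. With the classification of projectively-modeled circles and the correct identification of $W$ supplied, the remainder of your outline (discarding factors with finite image, Zariski-density of the surviving projections) does go through as in the paper.
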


From the Tits alternative (in the particular easy case of subgroups of $\PSL_2(\R)$), we deduce:

\begin{cor}
Every infinite subgroup $\PC_\proj(\Pone)$ with Property~FW has a non-abelian free subgroup. In particular, $\PC_\proj(\Pone)$ has no infinite amenable subgroup with Property~FW.\qed
\end{cor}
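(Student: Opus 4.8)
The plan is to feed the structure provided by Theorem~\ref{proj_fewfw} into the Tits alternative for $\PSL_2(\R)$ and then lift the resulting free group back into $\Gamma$. So let $\Gamma\le\PC_\proj(\Pone)$ be infinite with Property~FW, and let $W\le\Lambda\le\Gamma$ and $n\ge 1$ be as in the theorem, so that $\Lambda/W$ embeds into $\PSL_2(\R)^n$ with Zariski-dense projections. Composing the quotient $\Lambda\twoheadrightarrow\Lambda/W$ with the embedding and the first coordinate projection, I obtain a homomorphism $\phi:\Lambda\to\PSL_2(\R)$ whose image is Zariski-dense.

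First I would record that $\phi(\Lambda)$ is not virtually solvable: the Zariski closure of a virtually solvable subgroup of $\PSL_2(\R)$ is again virtually solvable (being contained, up to finite index, in a Borel subgroup or in the normalizer of a maximal torus), whereas $\PSL_2(\R)$ itself is not; since $\phi(\Lambda)$ is Zariski-dense, it is therefore non-elementary. In this regime the Tits alternative is elementary --- a ping-pong argument with two hyperbolic elements having distinct fixed-point pairs on $\Pone$, whose high powers play ping-pong --- and yields a non-abelian free subgroup $F=\langle a,b\rangle\le\phi(\Lambda)$.

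The remaining point is to produce a free subgroup inside $\Gamma$ itself rather than merely in the image $\phi(\Lambda)$, and here the lift is automatic: choosing $\tilde a,\tilde b\in\Lambda$ with $\phi(\tilde a)=a$ and $\phi(\tilde b)=b$, any nontrivial reduced word $w(\tilde a,\tilde b)$ maps under $\phi$ to the reduced word $w(a,b)\neq 1$ in the free group $F$, hence $w(\tilde a,\tilde b)\neq 1$. Thus $\tilde a$ and $\tilde b$ freely generate a non-abelian free subgroup of $\Lambda\le\Gamma$, which proves the first assertion.

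For the final sentence, suppose $\Gamma\le\PC_\proj(\Pone)$ is infinite, amenable, and has Property~FW. By the first part $\Gamma$ contains a non-abelian free subgroup; but subgroups of amenable groups are amenable and non-abelian free groups are not, a contradiction. Hence no such $\Gamma$ exists. I do not expect a genuine obstacle here: all the substantive content is packaged in Theorem~\ref{proj_fewfw}, and the only care needed is the (routine) verification that Zariski-density rules out virtual solvability and that the chosen preimages of the free generators remain free --- the latter precisely because $\phi$ cannot collapse a reduced relator of $F$.
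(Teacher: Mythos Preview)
Your proof is correct and follows exactly the route the paper indicates: apply Theorem~\ref{proj_fewfw}, project to one $\PSL_2(\R)$ factor, invoke the (easy) Tits alternative there, and lift the free generators back to $\Lambda\le\Gamma$. The paper merely states ``From the Tits alternative (in the particular easy case of subgroups of $\PSL_2(\R)$), we deduce'' and marks the corollary with a \qed, so you have simply unpacked what the paper leaves implicit.
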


Also, it is well-known that $\PSL_2(\R)$ has no infinite subgroup with Property~T. Indeed, since the locally compact group $\PSL_2(\R)$ has the Haagerup Property (Faraut-Harzallah \cite[Cor.\ 7.4]{FH}, reproved by Robertson \cite{Rob}), such a subgroup would be contained in a compact subgroup, and thus be abelian, and hence finite.

\begin{cor}
The group $\PC_\proj(\Pone)$ has no infinite subgroup with Kazhdan's Property~T.\qed
\end{cor}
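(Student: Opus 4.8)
The plan is to reduce to the fact, recorded just above the statement, that $\PSL_2(\R)$ has no infinite subgroup with Property~T, using Theorem~\ref{proj_fewfw} as the main input. Suppose, for contradiction, that $\Gamma\le\PC_\proj(\Pone)$ is an infinite subgroup with Kazhdan's Property~T. The first observation I would record is that Property~T implies Property~FW: comparing the two definitions given in the introduction, Property~FW is exactly the instance of the Property~T condition in which $(E,\mathcal{A},\mu)$ is a set equipped with the $\sigma$-algebra of all subsets and the counting measure, finiteness of measure then meaning finiteness of cardinality and $\mu(X\triangle\gamma X)<\infty$ meaning $X\triangle\gamma X$ finite. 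Hence $\Gamma$ has Property~FW, and Theorem~\ref{proj_fewfw} applies.

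This yields $n\ge 1$ and subgroups $W\le\Lambda\le\Gamma$ with $W$ finite normal and $\Lambda$ of finite index, together with an embedding $\Lambda/W\hookrightarrow\PSL_2(\R)^n$ whose projection to each factor is Zariski-dense. The second step is to transport Property~T down this tower. Since $\Lambda$ has finite index in $\Gamma$, it inherits Property~T; since $\Lambda/W$ is a quotient of $\Lambda$, it too has Property~T. Note also that $\Lambda$ is infinite, being of finite index in the infinite group $\Gamma$, and $W$ is finite, so $\Lambda/W$ is infinite.

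For the final step I would compose the embedding $\Lambda/W\hookrightarrow\PSL_2(\R)^n$ with a single coordinate projection $\PSL_2(\R)^n\to\PSL_2(\R)$. The resulting homomorphism $\Lambda/W\to\PSL_2(\R)$ has Zariski-dense, hence infinite, image $H$ (a finite subgroup could not be Zariski-dense in the positive-dimensional group $\PSL_2(\R)$); and $H$, being a quotient of $\Lambda/W$, again has Property~T. Thus $H$ is an infinite subgroup of $\PSL_2(\R)$ with Property~T, contradicting the fact recorded just above the statement, which follows from the Haagerup property of $\PSL_2(\R)$. This contradiction proves the corollary.

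As for where the difficulty lies: essentially all of the content is carried by Theorem~\ref{proj_fewfw} and by the Haagerup-property input for $\PSL_2(\R)$, both of which are available. The only points requiring a moment's care are the three elementary permanence facts invoked---that Property~T implies Property~FW, passes to finite-index subgroups, and passes to quotients---together with the observation that Zariski-density of the projection makes the image infinite, so that the contradiction is genuine. No further obstacle is expected.
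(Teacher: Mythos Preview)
Your argument is correct and is exactly the approach the paper intends: the corollary is marked \qed\ precisely because it follows from Theorem~\ref{proj_fewfw} together with the Haagerup-property fact about $\PSL_2(\R)$ recorded just before, via the permanence properties of Property~T you invoke. You have simply made explicit the details (T $\Rightarrow$ FW, passage to finite index and quotients, and projecting to one factor) that the paper leaves to the reader.
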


(The same conclusion fails for Property~FW, since $\SL_2(\Z[\sqrt{2}])$ has Property~FW as mentioned above.) The first known result of this flavor (in the context of 1-dimensional dynamics) is maybe Navas' result \cite{Nav} that the group of diffeomorphisms of class $>3/2$ of the circle has no infinite subgroup with Kazhdan's property~T.

Our approach also addresses the group $\PC_\aff(\R/\Z)$ of piecewise affine self-transformations (which can be viewed as subgroup of the previous one, since $\PC_\proj(\Pone)$ is isomorphic to $\PC_\proj(\R/\Z)$, through a piecewise homographic transformation between $\R/\Z$ and $\Pone$, or alternatively by extending as the identity outside $[0,1]$). In this case, we have a stronger conclusion:

\begin{thm}\label{affine_nofw}
The group $\PC_\aff(\R/\Z)$ has no infinite subgroup with Property~FW.
\end{thm}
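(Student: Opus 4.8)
The plan is to run the method behind Theorem~\ref{proj_fewfw} in the affine setting, where the relevant ``model'' group becomes \emph{compact} rather than $\PSL_2(\R)$. Suppose $\Gamma\le\PC_\aff(\R/\Z)$ is infinite with Property~FW. Every affine germ is homographic, so $\PC_\aff(\R/\Z)\le\PC_\proj(\R/\Z)\cong\PC_\proj(\Pone)$; hence Theorem~\ref{proj_fewfw} already provides subgroups $W\le\Lambda\le\Gamma$ with $W$ finite normal, $\Lambda$ of finite index, an integer $n\ge 1$, and an embedding $\Lambda/W\hookrightarrow\PSL_2(\R)^n$ whose $n$ coordinate projections $\Lambda\to\PSL_2(\R)$ are Zariski-dense. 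Replacing $\Gamma$ by $\Lambda$ --- still infinite, still a subgroup of $\PC_\aff(\R/\Z)$, and still with Property~FW, which is inherited by finite-index subgroups --- we may assume $\Gamma=\Lambda$, so $\Gamma/W$ embeds in $\PSL_2(\R)^n$ with Zariski-dense projections (and $\Gamma/W$ still has Property~FW, a property inherited by quotients). It remains to show that piecewise-affineness of $\Gamma$ makes this impossible unless $\Gamma$ is finite.

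For that I would re-examine, inside the proof of Theorem~\ref{proj_fewfw}, how the factors $\PSL_2(\R)$ are produced: under Property~FW the proof upgrades the partial (``generic'') action of $\Gamma$ on $\Pone$ to a genuine action, on a finite disjoint union of closed one-dimensional pieces, by transformations respecting a one-dimensional geometric structure on those pieces; the factors $\PSL_2(\R)$ are the identity components of the automorphism groups of the pieces, each being $\Pone$ with its projective structure, and the $i$-th projection is the induced representation. When $\Gamma\le\PC_\aff(\R/\Z)$, every germ in play is \emph{affine}, so the structure carried by each closed piece is an \emph{affine} structure, and the relevant model group is the automorphism group of a closed one-dimensional affine manifold; such a group is \emph{compact} (a copy of $\mathrm{O}(2)$ or $\SO(2)$). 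Therefore $\Gamma/W$ embeds into a product of compact groups, hence is virtually abelian; but a virtually abelian group with Property~FW is finite (pass to a finite-index abelian subgroup, which is finitely generated with Property~FW, and recall that $\Z$ --- and so every infinite finitely generated abelian group --- fails Property~FW). So $\Gamma/W$, and therefore $\Gamma$, is finite: a contradiction. (Consistency check: compactness of the image of each $\Gamma\to\PSL_2(\R)$ is flatly at odds with the Zariski-density in Theorem~\ref{proj_fewfw}, since a compact subgroup of $\PSL_2(\R)$ has abelian Zariski closure.)

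The real work --- and the main obstacle --- is this middle step: transplanting the partial-action and one-dimensional-geometric-structure machinery of \cite{Cmain} from the projective to the affine category, and checking that Property~FW still allows one to globalize the partial action in such a way that the closed model pieces obtained have \emph{compact} automorphism group, with no model carrying a non-abelian automorphism group (such as the full affine group of an open interval, or of a half-line) surviving the reduction. Keeping careful track of the breakpoints and of the chart data near them is the delicate part. Once that is in place, the remaining ingredients --- compactness of $\Aut$ of a closed affine $1$-manifold, and finiteness of virtually abelian groups with Property~FW --- are routine.
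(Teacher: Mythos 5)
Your strategy is the right one and is, in essence, the paper's; but as written the proposal is a plan rather than a proof, because the step you yourself single out as ``the real work'' --- upgrading the piecewise affine action to a genuine affine action on a suitable affine 1-manifold --- is left undone. In the paper there is nothing to ``transplant'': the regularization theorem (Theorem \ref{main}) is stated and proved \emph{simultaneously} for affinely-modeled and projectively-modeled 1-manifolds, with a proof that is word-for-word identical in the two cases, so it applies directly to a Property~FW subgroup of $\PC_\aff(\R/\Z)$. This also makes your first paragraph superfluous: there is no need to pass through Theorem \ref{proj_fewfw}, the embedding into $\PSL_2(\R)^n$, or Zariski density; one applies the affine case of Theorem \ref{main} and derives the contradiction with infiniteness directly.

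The second ingredient you assert without justification is that the resulting model pieces are closed with compact automorphism group. That is not quite accurate, and it is the place where a reference is needed: by Kuiper \cite{Ku1}, the connected Hausdorff \emph{finitely-charted} affinely-modeled 1-manifolds are the bounded open interval $\mathopen]0,1\mathclose[$, the standard circle $\R/\Z$, and the non-standard circles $\R_{>0}/\langle t\rangle$. The first of these is not closed, but all three have affine automorphism group with an abelian subgroup of index $\le 2$, which is all that is needed. Your worry about non-abelian models surviving the reduction is exactly what the finitely-charted condition in Theorem \ref{main} eliminates: the only connected Hausdorff affine 1-manifold whose affine automorphism group fails to be virtually abelian is $\R$ itself (with group $\R\rtimes\R^*$), and $\R$, like $\R_{>0}$, is \emph{not} finitely-charted as an affine manifold. (Note that your stated examples of problematic non-abelian groups are in fact harmless: the affine automorphism group of a bounded open interval is $\Z/2$, and that of a half-line is $\R_{>0}$, both abelian.) With ``virtually abelian'' in hand, your final step --- a virtually abelian group with Property~FW is finite --- is correct and is exactly how the paper concludes.
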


The same statement for its subgroup $\PC_\aff^0(\R/\Z)$ of continuous elements was independently proved by Lodha, Matte Bon, and Triestino \cite{LMT}. Even the case of Property~T is new in Theorem \ref{affine_nofw}; nevertheless the absence of infinite Property~T subgroups in its subgroup $\mathrm{IET}$ of piecewise translations was initially proved in \cite[Theorem 6.1]{DFG} with another approach.

The formalism of partial actions is very useful in the output proof. A regularization theorem in the context of birational actions of groups with Property~FW was recently obtained by the author \cite{C_bir} using similar concepts, but with a more involved proof. 

\smallskip
\noindent {\bf Acknowledgment.} I thank Pierre de la Harpe, Octave Lacourte and the referee for a number of corrections.

\section{Main concepts and auxiliary proofs}

Attempts to define partial actions were done many times, for instance in the context of integrating Lie algebras of vector fields by Palais \cite[Chap.\ III]{Pal}. Eventually a very general and flexible notion was introduced by Exel~\cite{E}.

\begin{defn}
A topological partial action of a (discrete) group $\Gamma$ on a topological space $X$ is an assignment $g\mapsto\alpha(g)$, where $\alpha(g)$ is a homeomorphism between two open subsets of $X$, satisfying the following conditions:
\begin{enumerate}
\item $\alpha(1_\Gamma)=\mathrm{id}_X$
\item $\alpha(g^{-1})=\alpha(g)^{-1}$, for all $g\in \Gamma$;
\item $\alpha(gh)$ extends $\alpha(g)\alpha(h)$, for all $g,h\in \Gamma$. 
\end{enumerate}
A partial action is called cofinite if for every $g\in\Gamma$, the domain of definition of $\alpha(g)$ is cofinite (= has finite complement) in $X$.
\end{defn}
Here $\alpha(g)^{-1}$ denotes the partial inverse, and $\alpha(g)\alpha(h)$ is  the composition: its graph consists of those $(x,x'')$ for which there exists $x'\in X$ such that $(x,x')$ belongs to the graph of $\alpha(h)$ and $(x',x'')$ belongs to the graph of $\alpha(g)$.

\begin{defn}
A globalization of a partial action $\alpha$ as above, is a continuous action $\beta$ of $\Gamma$ on a topological space $Y$, a homeomorphism $i$ from $X$ onto an open subset of $Y$, such that for all $g\in\Gamma$ and $x,x'\in X$, the element $(x,x')$ belongs to the graph of $\alpha(g)$ if and only if $(i(x),i(x'))$ belongs to the graph of $\beta(g)$. It is called essential if every $\Gamma$-orbit meets $i(X)$\end{defn}

In other words, viewing $i$ as an inclusion, this means that the partial action is obtained by restricting the action to the given open subset.

The following proposition is already present (with special hypotheses but the same straightforward proof) in \cite{Pal}, and asserted in full generality in \cite{AT,A,KL}.

\begin{prop}
Every partial action admits an essential globalization, unique up to unique isomorphism, called universal globalization. Moreover, the underlying set of the universal globalization coincides with the universal globalization of the partial action on the underlying (discrete) set; in other words, forgetting the topology of $X$ and taking the universal globalization commute.\qed
\end{prop}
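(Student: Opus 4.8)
The plan is to construct the universal globalization explicitly as a quotient of $\Gamma\times X$, verify directly that it is an essential globalization, and then extract uniqueness from its evident universal property. On the set $\Gamma\times X$ I would put the relation $(g,x)\sim(h,y)$ meaning $(x,y)\in\mathrm{graph}\,\alpha(h^{-1}g)$ (i.e. $x$ is in the domain of $\alpha(h^{-1}g)$ and $\alpha(h^{-1}g)(x)=y$). Axioms (1), (2), (3) translate respectively into reflexivity, symmetry, and transitivity of $\sim$; the point requiring care is transitivity, where from $\alpha(h^{-1}g)(x)=y$ and $\alpha(k^{-1}h)(y)=z$ one has $(x,z)\in\mathrm{graph}\bigl(\alpha(k^{-1}h)\alpha(h^{-1}g)\bigr)$, and one then invokes the \emph{extension} clause of axiom (3), namely that $\alpha(k^{-1}g)$ extends $\alpha(k^{-1}h)\alpha(h^{-1}g)$, to conclude simultaneously that $x\in\mathrm{dom}\,\alpha(k^{-1}g)$ and $\alpha(k^{-1}g)(x)=z$. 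Set $Y=(\Gamma\times X)/{\sim}$, define $\beta(g')[(g,x)]=[(g'g,x)]$ — well defined and a genuine global action since $(g'h)^{-1}(g'g)=h^{-1}g$ — and $i(x)=[(1,x)]$. Then $i$ is injective, $(x,x')\in\mathrm{graph}\,\alpha(g)\Leftrightarrow[(g,x)]=[(1,x')]\Leftrightarrow(i(x),i(x'))\in\mathrm{graph}\,\beta(g)$, and every orbit meets $i(X)$ since $\beta(g)^{-1}[(g,x)]=i(x)$.

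For the topology I would equip $Y$ with the finest topology making every map $j_g\colon X\to Y$, $x\mapsto[(g,x)]$, continuous. Then $j_g^{-1}(i(X))$ is the (open) domain of $\alpha(g)$, so $i(X)$ is open; and for $V\subseteq X$ open, $j_g^{-1}(i(V))=\{x\in\mathrm{dom}\,\alpha(g):\alpha(g)(x)\in V\}$ is open because $\alpha(g)$ is a homeomorphism between open sets, so $i$ is an open map, hence a homeomorphism onto an open subset. Each $\beta(g')$ is continuous because $\beta(g')\circ j_g=j_{g'g}$ and $Y$ carries the final topology, and $\beta(g'^{-1})$ inverts it; thus $\beta$ is an action by homeomorphisms and $(\beta,Y,i)$ is an essential globalization. (No separation is asserted, and $Y$ need not be Hausdorff.)

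For uniqueness, given another essential globalization $(\beta',Y',i')$, I would set $\phi[(g,x)]=\beta'(g)(i'(x))$. Well-definedness and injectivity both reduce to the graph-compatibility of $\beta'$: $(g,x)\sim(h,y)\Leftrightarrow(x,y)\in\mathrm{graph}\,\alpha(h^{-1}g)\Leftrightarrow(i'(x),i'(y))\in\mathrm{graph}\,\beta'(h^{-1}g)\Leftrightarrow\beta'(g)(i'(x))=\beta'(h)(i'(y))$. Equivariance and $\phi\circ i=i'$ are formal, and surjectivity of $\phi$ is precisely essentiality of $\beta'$. That $\phi$ is a homeomorphism: continuity from $\phi\circ j_g=\beta'(g)\circ i'$ and the final topology on $Y$; openness because every open $U\subseteq Y$ equals $\bigcup_g j_g\bigl(j_g^{-1}(U)\bigr)$ (each point of $Y$ is some $j_g(x)$), whence $\phi(U)=\bigcup_g\beta'(g)\bigl(i'(j_g^{-1}(U))\bigr)$ is a union of open sets, $i'$ being an open embedding. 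Finally, any $\Gamma$-equivariant homeomorphism $\psi$ with $\psi\circ i=i'$ satisfies $\psi[(g,x)]=\psi(\beta(g)i(x))=\beta'(g)i'(x)=\phi[(g,x)]$, so $\phi$ is the unique isomorphism. For the last assertion: the set $Y$ and the action $\beta$ were built from the graphs of the $\alpha(g)$ in $X\times X$ alone, with no reference to the topology of $X$, and when $X$ is discrete the final topology on $Y$ above is discrete; hence forgetting the topology commutes with taking the universal globalization.

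The genuinely delicate steps — the ``hard parts'' hidden in the word ``straightforward'' — are (i) the verification of transitivity of $\sim$, the one place where the ``extends'' in axiom (3) is used in full strength, tracking domains and not merely values; and (ii) the proof that the comparison map $\phi$ is open, where one must use that $Y$ is covered by the images of the $j_g$ (equivalently, essentiality) rather than merely that $\phi$ is a continuous equivariant bijection.
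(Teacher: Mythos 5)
Your construction is the standard one (the quotient of $\Gamma\times X$ by $(g,x)\sim(h,y)\Leftrightarrow\alpha(h^{-1}g)(x)=y$, with the final topology), which is exactly the argument in the references the paper cites for this proposition (Abadie, Kellendonk--Lawson; the paper itself omits the proof), and all the steps check out, including the two points you flag as delicate. No gaps.
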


Property~FW will be used in the following form (as in \cite{C_bir}):

\begin{defn}\label{d_fw}
A group $\Gamma$ has Property~FW if for every set $X$ and for every cofinite partial $\Gamma$-action on $X$, there exists a $\Gamma$-invariant subset $Y$ of its universal globalization, such that the symmetric difference $X\triangle Y$ is finite.
\end{defn}

For our purposes, let us just slightly strengthen the conclusion.

\begin{prop}\label{p_neu}
Let $\Gamma$ be a group with Property~FW. In the setting of Definition \ref{d_fw}, the $\Gamma$-invariant subset $Y$ can be chosen to satisfy: for every finite subset $F$ of $Y$, there exists $g\in\Gamma$ such that $gF\subset X$.
\end{prop}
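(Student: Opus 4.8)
The plan is to leverage Property~FW not just once but iteratively, combining the basic conclusion of Definition~\ref{d_fw} with a maximality argument. First I would invoke Property~FW to obtain \emph{some} $\Gamma$-invariant subset $Y_0$ of the universal globalization $\widehat X$ with $X\triangle Y_0$ finite. The issue is that such a $Y_0$ need not satisfy the ``recurrence'' property that every finite $F\subset Y_0$ can be pushed into $X$ by a single group element. To fix this, observe that since $Y_0$ is $\Gamma$-invariant and meets $X$ in a cofinite subset of $X$, every $\Gamma$-orbit inside $Y_0$ does meet $X$ (indeed the universal globalization is essential, so every orbit meets $i(X)=X$, and being inside the invariant set $Y_0$ forces the intersection to lie in $X\cap Y_0$). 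So each individual point of $Y_0$ can be moved into $X$; the content of the proposition is the uniform statement for finite subsets.

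The key step is to pass to a smaller invariant subset on which this uniformity holds. I would consider the collection of $\Gamma$-invariant subsets $Y\subset Y_0$ with $Y_0\setminus Y$ finite, equivalently (since finiteness is preserved) with $X\triangle Y$ finite, and among these try to choose one minimizing, or at least controlling, the ``bad'' part. More concretely: call a finite subset $F\subset Y_0$ \emph{scattered} if there is no $g\in\Gamma$ with $gF\subset X$. If no scattered set exists we are done with $Y=Y_0$. Otherwise I want to excise scattered behavior. The right move is to note that if $F$ is scattered, then $\Gamma F$ is an invariant set disjoint from... no — rather, one should argue that the union of all $\Gamma$-orbits that fail the recurrence condition can be removed. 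Here is the cleaner route: define $Z$ to be the set of points $y\in Y_0$ whose orbit $\Gamma y$ meets $X$ in an \emph{infinite} set — but since $\Gamma y\cap X$ is always nonempty and $\Gamma y\cap(Y_0\setminus X)$ is a subset of the finite set $Y_0\setminus X$, in fact $\Gamma y\cap X$ is cofinite in $\Gamma y$. That does not immediately give uniformity across a finite union of orbits either. So the honest approach is the maximality/compactness one: among invariant $Y$ with $X\triangle Y$ finite, pick $Y$ with $|Y\setminus X|$ minimal. I claim this $Y$ works: if $F\subset Y$ is finite and no $g$ satisfies $gF\subset X$, then write $F=F'\sqcup\{y\}$ with $y\in Y\setminus X$ chosen in $F$ (if $F\subset X$ already take $g=1$); set $Y'=Y\setminus\Gamma y$. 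Then $Y'$ is invariant, and $Y\setminus Y'=\Gamma y\cap Y\subseteq (Y\setminus X)\cup(\Gamma y\cap X)$; one shows $\Gamma y\cap X$ is finite — in fact contained in $Y\setminus$(the orbit after removal) — hmm, this needs care because $\Gamma y\cap X$ could a priori be infinite, which is exactly why $Y'$ might not have finite symmetric difference with $X$.

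The main obstacle, then, is precisely this: an orbit $\Gamma y$ with $y\in Y_0\setminus X$ could meet $X$ in an infinite set, and removing it destroys the finiteness of $X\triangle Y$. I expect the resolution to come from \emph{reapplying Property~FW} to the restriction of the partial action, or equivalently from the cofiniteness hypothesis on the partial action together with a connectedness property of the universal globalization. Specifically, the partial action being \emph{cofinite} means each $\alpha(g)$ has cofinite domain, which strongly constrains orbits: two points of $X$ in the same $\Gamma$-orbit of $\widehat X$ are typically related by an $\alpha(g)$ directly (on a cofinite domain). The clean statement to extract is that for the universal globalization of a cofinite partial action, and for a $\Gamma$-invariant $Y$ with $Y\triangle X$ finite, the ``defect'' set $Y\setminus X$ meets only finitely many $\Gamma$-orbits and, better, one can realize the recurrence by a compactness argument: the set of $g\in\Gamma$ with $gF\subset X$ is, for each fixed finite $F\subset Y$, shown to be nonempty by writing $F\subset gX$ for suitable $g$ — since $X$ is cofinite in each $\alpha(g)$-domain, $\bigcap_{g\in S}gX$ is cofinite in $\widehat X$ along the relevant orbits for any finite $S$, and choosing $S$ so that $F\subset \bigcup_{g\in S}$(the relevant translates) does the job. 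I would therefore structure the proof as: (i) get $Y_0$ from Property~FW; (ii) using cofiniteness of the partial action, prove that for each $y\in Y_0$ the set $\{g:gy\in X\}$ is cofinite in $\Gamma$ up to the finite-index stabilizer issues, hence for finite $F$ the set $\{g:gF\subset X\}=\bigcap_{y\in F}\{g:gy\in X\}$ is nonempty; (iii) conclude $Y=Y_0$ already satisfies the conclusion, making the strengthening essentially automatic once cofiniteness is exploited — with the only real work being step (ii), the transfer from ``cofinite domains'' to ``cofinitely many translates land in $X$.''
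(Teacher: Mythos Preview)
Your final plan has a genuine gap, in fact two linked ones.

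First, the conclusion ``$Y=Y_0$ already satisfies the conclusion'' is simply false in general. If some $y\in Y_0\smallsetminus X$ has a \emph{finite} $\Gamma$-orbit, take $F=\Gamma y$. Then $gF=\Gamma y\not\subset X$ for every $g$, so no translate of $F$ lands in $X$. Such finite orbits can certainly occur; nothing in Property~FW or in the cofiniteness of the partial action rules them out. You brushed close to the right fix when you started to define $Z$ via orbits meeting $X$ infinitely, but then abandoned it. The paper's move is exactly this pruning: pass from $Y_0$ to $Y=Y_0\smallsetminus\bigcup\{\Gamma y:\ \Gamma y\text{ finite and }\Gamma y\cap(Y_0\smallsetminus X)\neq\varnothing\}$. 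This removes only finitely many points (each such orbit is finite and meets the finite set $Y_0\smallsetminus X$), so $X\triangle Y$ stays finite.

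Second, even after this pruning, your step~(ii) does not go through as written. For $y\in Y$ the set $\{g:gy\notin X\}=\{g:gy\in Y\smallsetminus X\}$ is a finite union of left cosets of $\mathrm{Stab}(y)$; that is correct. But ``cofinite up to stabilizer issues'' is not a statement one can intersect: for a finite $F\subset Y$, the bad set $\bigcup_{y\in F}\{g:gy\notin X\}$ is a finite union of cosets of \emph{several different} subgroups, and a finite union of cosets can very well cover a group. What saves the day is precisely B.~H.~Neumann's lemma: if a group is covered by finitely many cosets $g_iH_i$, then some $H_i$ has finite index. After the pruning, every $y\in Y$ with $\Gamma y\cap(Y\smallsetminus X)\neq\varnothing$ has infinite orbit, hence infinite-index stabilizer; Neumann's lemma then forbids the covering, so some $g$ with $gF\subset X$ exists. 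This is the paper's argument, and it is not recoverable from the cofiniteness of the partial action alone as you suggest in~(ii)--(iii).
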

\begin{proof}
First, choose $Y_0$ as in the definition, and define $Y$ as the complement in $Y_0$ of the union of finite $\Gamma$-orbits in $Y_0$ meeting the finite subset $Y_0\smallsetminus X$. Then for every $F$, a lemma due to B.H.\ Neumann \cite{Neu} (see also \cite[Lemma 6.25]{Poi}) ensures the existence of~$g$.
\end{proof}

An immediate but crucial observation is that if the partial action preserves some geometric structure, then this geometric structure is inherited by the universal globalization, and preserved by its $\Gamma$-action. Here for the sake of brevity, we only consider the following geometric structures: 
\begin{itemize}
\item 1-manifolds with an affine structure: charts valued in $\R$, with affine change of charts;
\item 1-manifolds with a projective structure: charts valued in $\Pone$, with homographic ($x\mapsto \frac{ax+b}{cx+d}$) change of charts.
\end{itemize}
We call them affinely-modeled and projectively-modeled 1-manifolds. Of course every affine structure defines a projective structure, and every projective structure defines a smooth (analytic) structure.

\begin{war}
Although we are mostly interested in Hausdorff manifolds, we do {\it not} assume that the manifolds are Hausdorff: a 1-manifold here is just a topological space locally homeomorphic to $\R$. The reason is that taking the universal globalization usually does not preserve being Hausdorff, and the proof needs to transit through this outlandish world. (Nevertheless, being a manifold implies the $T_1$-separation axiom: finite subsets are closed.)
\end{war}

The classification of connected Hausdorff affinely-modeled and projectively-modeled 1-manifolds was done by Kuiper \cite{Ku1,Ku2}, up to a minor (but subtle) error in \cite{Ku2} (see the appendix in~\cite{Cmain}).

While these notions are standard, we need to introduce this one:

\begin{defn}An affinely-modeled or projectively modeled 1-manifold is finitely-charted if it has a finite covering by bounded charts: here bounded means valued in a bounded interval of $\R$.
\end{defn}

Clearly this implies having finitely many components. Every compact affinely/ projectively-modeled 1-manifold is finitely-charted. But the affinely-modeled 1-manifolds $\R$ and $\R_{>0}$ are finitely-charted as projectively-modeled 1-manifold, but not as affinely-modeled 1-manifolds. An example of a connected Hausdorff non-finitely-charted projectively-modeled 1-manifold is the universal covering of $\Pone$.

A piecewise affine/homographic transformation between affinely/projectively-modeled 1-manifolds $X,Y$ is a locally affine/homographic isomorphism between cofinite subsets (identifying two such isomorphisms whenever they coincide on a cofinite subset). Denote by $\PC_\aff(X)$ and $\PC_\proj(X)$ the group of piecewise affine/homgographic self-transformations of $X$ (when it makes sense). A piecewise affine, resp.\ piecewise homographic, transformation $X\dasharrow Y$ induces an isomorphism $\PC_\aff(X)\to\PC_\aff(Y)$, resp.\ $\PC_\proj(X)\to\PC_\proj(Y)$.

\section{Regularization theorem and proofs}

\begin{thm}\label{main}
Let $\Gamma$ be a group with Property~FW. Let $X$ be a Hausdorff finitely-charted affinely-modeled [respectively projectively-modeled] 1-manifold. Let $\Gamma\to\PC_\aff(X)$ [resp.\ $\Gamma\to\PC_\proj(X)$] be a homomorphism. Then there exists a Hausdorff finitely-charted affinely-modeled (resp.\ projectively-modeled) 1-manifold $Y$, a piecewise affine [resp.\ piecewise homographic] transformation $X\dasharrow Y$, such that the induced map $\Gamma\to\PC_\aff(Y)$ [resp.\ $\Gamma\to\PC_\proj(Y)$] actually maps into the group of affine (resp.\ homographic) automorphisms of $Y$. 
\end{thm}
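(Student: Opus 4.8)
The plan is to turn the piecewise action into a genuine (partial) action by "cutting $X$ at the breakpoints" and then globalizing. Concretely, I would first produce, for the given homomorphism $\rho\colon\Gamma\to\PC_\aff(X)$ (the projective case being identical, reading "homographic" for "affine" throughout), a cofinite partial action of $\Gamma$ on $X$ in the sense of the earlier definition, which refines $\rho$ and preserves the affinely-modeled structure. The point is that each $\rho(g)$ is represented by a locally affine isomorphism between cofinite subsets of $X$; the obstruction to this being a partial action is that composition of representatives need only agree with the representative of the product \emph{off a finite set}. One fixes this by the standard device: replace $X$ by $X$ minus a suitable countable $\Gamma$-stable set of bad points. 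Since $X$ is finitely-charted, hence second countable, and $\Gamma$ is finitely generated (Property~FW forces finite generation), the union over $g$ in a generating set — and then over all $\Gamma$ by the cocycle relations — of the relevant discontinuity/disagreement loci is countable; removing it and its $\Gamma$-saturation yields an open dense $\Gamma$-stable subset $X^\circ$ on which $g\mapsto\alpha(g)$ is an honest partial action. It is cofinite because each $\rho(g)$ was defined off a finite set and we only removed a discrete set. This partial action preserves the affinely-modeled structure by construction.

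Next I would invoke the globalization theory: let $Y^\circ$ be the universal globalization of this partial action. By the proposition quoted in the excerpt, $Y^\circ$ carries a $\Gamma$-action by homeomorphisms, contains $X^\circ$ as an open subset, and inherits the affinely-modeled structure (the crucial observation stated before the Warning: a preserved 1-dimensional geometric structure passes to the universal globalization, with $\Gamma$ acting by affine automorphisms). Now apply Property~FW in the form of Proposition~\ref{p_neu} to this cofinite partial action: there is a $\Gamma$-invariant subset $Z\subseteq Y^\circ$ with $X^\circ\triangle Z$ finite and such that every finite subset of $Z$ can be pushed into $X^\circ$ by some element of $\Gamma$. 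Set $Y$ to be $Z$ together with the finitely many points of $X\setminus X^\circ$ reinserted as needed — more carefully, take $Y$ to be the interior (or the union of $Z$ with an open neighborhood absorbing the finite symmetric difference) so that $Y$ is an open $\Gamma$-invariant sub-1-manifold of $Y^\circ$. Then $Y$ is affinely-modeled, $\Gamma$ acts on $Y$ by affine automorphisms, and the inclusions $X^\circ\hookrightarrow X$, $X^\circ\hookrightarrow Y$ (both cofinite) compose to a piecewise affine transformation $X\dashrightarrow Y$ inducing the given homomorphism; by transport of structure $\rho$ becomes a map into $\Aut_{\aff}(Y)$.

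It remains to arrange that $Y$ is \emph{Hausdorff} and \emph{finitely-charted}, and this is where I expect the main difficulty, since the Warning explicitly flags that universal globalizations need not be Hausdorff. Here the "every finite subset of $Y$ is moved into $X^\circ$ by some $g\in\Gamma$" clause from Proposition~\ref{p_neu} is exactly the tool: two points of $Y$ that cannot be separated would, after translating both into the Hausdorff manifold $X$ (itself inside a common chart), have to be separated there — so non-Hausdorff pairs are killed by passing to the $\Gamma$-invariant $Z$, and one checks this is preserved under taking the relevant open subset. For finitely-charted: $X^\circ$ is covered by finitely many bounded charts (restrict the finitely many bounded charts of $X$), and since $Y=\bigcup_{g\in\Gamma}g\cdot(\text{a finite bounded chart cover of }X^\circ)$ only up to the finite difference, while a priori this is an infinite union, the Neumann-type property again lets one extract a finite subcover: any point of $Y$ lies in $g^{-1}X^\circ$ for some $g$, hence in a bounded chart, but to get \emph{finitely many} charts one uses that $Y\setminus X^\circ$ is finite, so finitely many translated charts suffice to cover it, and these together with the original finite cover of $X^\circ$ do the job. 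Assembling these separation and compactness-type verifications — in the non-Hausdorff category, and tracking that the affinely-modeled atlas descends correctly — is the technical heart of the argument; everything else is the formal machinery of partial actions plus a single application of Property~FW.
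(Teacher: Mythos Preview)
Your overall architecture---build a cofinite partial action preserving the geometric structure, globalize, apply Proposition~\ref{p_neu}, then verify Hausdorffness and finitely-chartedness via the Neumann clause---is exactly the paper's. The problem is your first paragraph, where you try to \emph{fix} the partial action by deleting a countable $\Gamma$-stable set of ``bad points''. This step is both unnecessary and, as written, actually wrong.

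It is unnecessary because the partial action axiom only demands that $\alpha(gh)$ \emph{extend} $\alpha(g)\alpha(h)$, not that they coincide. Define $\alpha(f)$ at $x$ whenever some representative $\bar f$ of $f$ is continuous and affine near $x$; the value $\bar f(x)$ is independent of the choice since $X$ has no isolated points. Now if $\alpha(h)$ is defined at $x$ and $\alpha(g)$ at $\alpha(h)(x)$, then $\bar g\circ\bar h$ is itself a representative of $gh$ (representatives compose up to finite sets) which is continuous and affine at $x$; hence $\alpha(gh)$ is defined at $x$ with the right value. So the partial action lives on all of $X$ from the start---no surgery required.

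It is wrong because the $\Gamma$-saturation of the finite breakpoint set is typically \emph{dense} (think of an interval exchange with an irrational translation length), hence not closed; your $X^\circ$ then fails to be open, fails to be a $1$-manifold, and is certainly not finitely-charted. Your later assertions that ``$X\smallsetminus X^\circ$ is finite'' and that restricting the charts of $X$ gives a finite bounded atlas for $X^\circ$ both rely on this and collapse.

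Once you drop the detour and run the argument on $X$ itself, the second and third paragraphs simplify: take $Y$ to be \emph{exactly} the invariant set from Proposition~\ref{p_neu}---no reinsertion or interior is needed. Since $X\smallsetminus Y$ is finite, $X\cap Y$ is open in $\hat X$; since $Y\smallsetminus X$ is finite and each of its points can be translated into $X\cap Y$, the set $Y$ is a finite union of $\Gamma$-translates of $X\cap Y$, hence open and finitely-charted. Your Hausdorffness argument via the Neumann clause is then correct and is precisely the paper's.
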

\begin{proof}
The two proofs are strictly similar (and applicable to other geometric structures), so let us do the affine case; the projective case just consists in changing the adequate word at each place denoted ($\ast$) below.

Define a partial action of $\PC_\aff(X)$ ($\ast$) on $X$, saying that $f$ is defined at $x$ if some representative $\bar{f}$ of $f$ is continuous and affine ($\ast$) at $x$: then $\bar{f}(x)$ does not depend on the choice of $\bar{f}$ (this just uses that $X$ has no isolated point). Let $X\to\hat{X}$ be the universal globalization. Using that $\Gamma$ has Property~FW, let $Y\subseteq\hat{X}$ be given by Proposition \ref{p_neu}. As a finite union of translates of an open subset of $X$ (namely $X\cap Y$), the subset $Y$ is open, and is a finitely-charted affinely-modeled ($\ast$) manifold. Since by the conclusion of Proposition \ref{p_neu}, any pair in $Y$ can be translated into $X$, and since $X$ is Hausdorff, we deduce that $Y$ is Hausdorff too.
\end{proof}

\begin{proof}[Proof of Theorem \ref{affine_nofw}] It follows as a corollary of Theorem \ref{main}: indeed, by Kuiper \cite{Ku1}, the connected Hausdorff finitely-charted affinely-modeled 1-manifolds are, up to isomorphism: the open interval $\mathopen] 0,1\mathclose[$, the standard circle $\R/\Z$, and the non-standard circles $\R_{>0}/\langle t\rangle$, where the latter means the quotient by the discrete subgroup generated by multiplication by $t$, where $t>1$ is a fixed number. For each such affine manifold, the affine automorphism group has an abelian subgroup of index $\le 2$. In particular, for an arbitrary Hausdorff finitely-charted affinely-modeled 1-manifold, the affine automorphism group is virtually abelian (indeed some finite index subgroup preserves each connected component). Since a virtually abelian group with Property~FW is finite, the conclusion follows.\end{proof}

Let us now deduce Theorem \ref{proj_fewfw} from Theorem \ref{main}. We need the following result which follows from classification (as almost achieved in \cite{Ku2}); however we give a short classification-free proof.

\begin{lem}\label{large_pro}
Let $X$ be a Hausdorff connected finitely-charted projectively-modeled 1-manifold, whose homographic automorphism group $\Aut_\proj(X)$ is not virtually metabelian. Then $X$ is isomorphic to a finite covering of $\Pone$.
\end{lem}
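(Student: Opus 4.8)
The plan is to analyze the developing map of the projective structure on $X$. Pick a universal cover $\tilde X \to X$ with deck group $\pi$; since $X$ is a connected projectively-modeled $1$-manifold, there is a developing map $D:\tilde X \to \Pone$ equivariant with respect to a holonomy representation $\rho:\pi\to\PGL_2(\R)$, and every homographic automorphism of $X$ lifts (non-uniquely, up to deck transformations) to a homographic automorphism of $\tilde X$, hence to a map $\tilde X\to\tilde X$ intertwining $D$ with left-translation by some element of $\PGL_2(\R)$ normalizing (a conjugate of) the image of $\rho$. The upshot is that $\Aut_\proj(X)$ fits into an extension by $\pi$ of a subgroup of the normalizer $N$ of $\rho(\pi)$ in $\PGL_2(\R)$ (more precisely of $\{g\in\PGL_2(\R): g\rho(\pi)g^{-1}=\rho(\pi)\}$, acting on the conjugated $D(\tilde X)$). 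The first step is thus to set up this developing/holonomy machinery carefully in the possibly-non-Hausdorff but finitely-charted setting, noting that finitely-charted forces $D$ to have image a proper open subset of $\Pone$ unless $X$ is a covering of $\Pone$ itself, and forces good control (boundedness) on fundamental domains.

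Next I would run a case analysis on $\rho(\pi)$, viewed as a subgroup of $\PGL_2(\R)$, using that $\Aut_\proj(X)$ is \emph{not} virtually metabelian to eliminate all cases except the desired one. If $\rho(\pi)$ is infinite and not virtually abelian, then by the Tits alternative / the classification of subgroups of $\PGL_2(\R)$ it contains a non-abelian free group, so it is discrete and non-elementary only in limited configurations; but then its normalizer $N$ in $\PGL_2(\R)$ is discrete (a non-elementary group has finite index in its normalizer, in fact the normalizer of a Zariski-dense subgroup is itself, and a discrete non-elementary group has discrete normalizer), so $\Aut_\proj(X)$ is an extension of a discrete group by $\pi$, i.e. discrete; one then checks using finitely-chartedness that $D(\tilde X)$ being $\rho(\pi)$-invariant and having quotient covered by finitely many bounded charts is impossible unless $D(\tilde X)=\Pone$ (a non-elementary discrete group has limit set of positive size, so cannot act with a relatively compact fundamental domain on a proper invariant open subset in the way finitely-charted demands), forcing $X$ to be a finite covering of $\Pone$. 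If instead $\rho(\pi)$ is finite, virtually abelian infinite, then the domain $D(\tilde X)$ is (a cover of) an open subset with elementary holonomy; here $N$ is the normalizer of an elementary subgroup, which is virtually metabelian (normalizers of one-parameter subgroups, of lattices in them, of finite groups, or of parabolics are all virtually metabelian in $\PGL_2(\R)$), so $\Aut_\proj(X)$, being an extension of a virtually metabelian group by the (virtually abelian) group $\pi$, is itself virtually metabelian — contradicting the hypothesis. So the hypothesis forces the only surviving branch.

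Concretely the branches to dispatch are: (i) $D$ is injective with image $\Pone$, i.e. $X=\Pone$ — allowed; (ii) $D$ is a covering onto $\Pone$ with $\pi$ infinite cyclic acting through a loxodromic holonomy or $D$ factors through a finite cover — this is exactly ``finite covering of $\Pone$'', allowed (note $X$ finitely-charted rules out the infinite cyclic cover); (iii) $D$ has image a proper open subset $\Omega\subsetneq\Pone$: then $\Omega$ is a union of intervals, $\pi$ acts on $\Omega$ properly discontinuously with $X$ finitely-charted, and I claim $\rho(\pi)$ must be elementary — because a non-elementary group cannot leave invariant a proper open subset of $\Pone$ with compact-type quotient structure forced by finite-chartedness (its limit set would have to avoid $\Omega$ yet be approached within $\Omega$, incompatible with only finitely many bounded charts). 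Hence in case (iii) we land in the virtually-metabelian branch and get a contradiction. Assembling, $\Aut_\proj(X)$ not virtually metabelian forces (i) or (ii), i.e. $X$ is a finite covering of $\Pone$, as claimed.

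The main obstacle I anticipate is making rigorous the claim in case (iii) that ``finitely-charted $+$ proper open developing image $\Rightarrow$ elementary holonomy'': one must show that if $\rho(\pi)$ were non-elementary (equivalently Zariski-dense, equivalently containing a free group acting with a genuine limit set), then no $\rho(\pi)$-invariant proper open $\Omega$ can have a quotient that is covered by finitely many \emph{bounded} charts, because bounded charts near the ``ends'' of $\Omega$ would have to accumulate on the limit set of $\rho(\pi)$, which is infinite and $\rho(\pi)$-invariant and thus would need infinitely many charts to surround — or alternatively, that $\Omega$ with the $\rho(\pi)$-action simply has no globally bounded fundamental domain. This is where the hypothesis ``finitely-charted'' does the real work, and it must be leveraged with some care (it is exactly the analogue of the compactness/convex-cocompactness input one would use in the classical Kuiper classification, which the lemma is designed to shortcut).
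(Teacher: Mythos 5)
Your overall toolkit (developing map, holonomy, normalizers) is the same as the paper's, but the paper runs it one level up: it lifts the developing map to the universal cover $\widetilde{\Pone}$, where a local homeomorphism of connected simply connected $1$-manifolds is automatically monotone, hence injective. Thus $X$ is either an open interval $I\subseteq\widetilde{\Pone}$ or a quotient $I/\langle t\rangle$, and $\Aut^+_\proj(X)$ is computed directly inside $\widetilde{\SL_2(\R)}$, where the stabilizer of a proper subinterval fixes an endpoint and point stabilizers are metabelian (isomorphic to subgroups of the Borel subgroup of $\PSL_2(\R)$, since the center of $\widetilde{\SL_2(\R)}$ acts freely on $\widetilde{\Pone}$). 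Working instead with $D:\tilde X\to\Pone$, as you do, creates a genuine gap: your trichotomy on $D(\tilde X)$ omits the case where $D$ is surjective but neither injective nor a covering, i.e.\ $I$ wraps more than once around $\Pone$ without being all of $\widetilde{\Pone}$. In that case the holonomy can be trivial, so its normalizer is all of $\PGL_2(\R)$, and the developing image is all of $\Pone$, so none of the invariants you propose to use detects anything; yet $X$ is not a finite cover of $\Pone$, and the metabelianity of its automorphism group is only visible upstairs in $\widetilde{\SL_2(\R)}$. A second, independent error is the group-theoretic closure step: an extension of a virtually metabelian group by a virtually abelian group need \emph{not} be virtually metabelian (the derived length can reach $3$), so the conclusion of your elementary-holonomy branch does not follow as stated; one needs the specific structure of the groups involved, which again is what the explicit computation in $\widetilde{\SL_2(\R)}$ supplies.

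Separately, the step you flag as the main anticipated obstacle --- that finite-chartedness plus a proper open developing image forces elementary holonomy, via limit sets --- is a detour. If $D(\tilde X)$ is a proper open interval $\Omega\subsetneq\Pone$, then $D$ is injective, the holonomy and all lifted automorphisms preserve $\Omega$, hence permute its at most two boundary points, hence lie in a virtually abelian (in particular metabelian) subgroup of $\PGL_2(\R)$; no Tits alternative or limit-set analysis is needed, and no non-elementary subcase survives to be excluded. The hypothesis ``finitely-charted'' is not used to control limit sets at all: its only role is to rule out $X\cong\widetilde{\Pone}$ (and, in the circle case, to make the covering of $\Pone$ finite). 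So the proposal is repairable --- add the missing wrapping case, replace the normalizer-of-holonomy bookkeeping by the stabilizer of the lifted developing image in $\widetilde{\SL_2(\R)}$, and drop the limit-set argument --- but as written it is not a proof.
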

\begin{proof}
First suppose that $X$ is homeomorphic to an interval; then $X$ is isomorphic to some (necessarily non-empty) open interval $I$ in the universal covering $\widetilde{\Pone}$, whose oriented homographic automorphism group $\Aut^+_\proj(\widetilde{\Pone})$ can be identified to $\widetilde{\SL_2(\R)}$. Then $\Aut^+_\proj(I)$ is the stabilizer of $I$. Since point stabilizers for the action of $\widetilde{\SL_2(\R)}$ on $\widetilde{\Pone}$ are metabelian, we deduce that $\Aut^+_\proj(X)$ is metabelian, unless $X=\widetilde{\Pone}$, but the latter is excluded since $X$ is finitely-charted. 

Otherwise, $X$ is homeomorphic to the circle, and hence isomorphic to the quotient of some nonempty open interval $I$ in $\widetilde{\Pone}$ by a cyclic subgroup $\langle t\rangle$ of $\widetilde{\SL_2(\R)}$ acting freely and properly on $I$.  The oriented automorphism group of $X$ is therefore isomorphic to $N/\langle t\rangle$, where $N$ is the normalizer of $\langle t\rangle$ in $\widetilde{\SL_2(\R)}$. Then $N$ non-metabelian forces $N$ to be the whole group, which means that $t$ is central. This precisely means the desired conclusion.
\end{proof}

\begin{proof}[Proof of Theorem \ref{proj_fewfw}]Let $\Gamma$ be as in Theorem \ref{proj_fewfw}. By Theorem \ref{main}, we can suppose that $\Gamma$ is a subgroup of the automorphism group of a Hausdorff, finitely-charted projectively-modeled 1-manifold $Y$. Let $\Gamma^0$ be its normal subgroup of finite index consisting of elements preserving each component of $Y$ as well as its orientation: it also has Property~FW. Let $X$ be the union of components $Z$ of $Y$ such that the image of $\Gamma^0\to\mathrm{Homeo}(Z)$ is infinite. Then $X$ is $\Gamma$-invariant and the action of $\Gamma$ on $X$ is faithful on some finite index subgroup, so has finite kernel.  For every component $Z$ of $X$, the image of $\Gamma^0\to\mathrm{Homeo}(Z)$ is infinite with Property~FW, hence not virtually metabelian; hence by Lemma \ref{large_pro}, $Z$ is an $n$-fold covering of $\Pone$ for some $n\ge 1$; hence its oriented homographic automorphism isomorphic to $\PSL_2(\R)^{(k)}$, the connected $k$-fold covering of $\PSL_2(\R)$. Modding out by the center for each of the $n$ components of $X$, we obtain a homomorphism $\Gamma'\to\PSL_2(\R)^n$ with finite kernel, such that each projection has non-virtually-metabelian image, hence is Zariski-dense. Since $\Gamma$ is infinite, $n\ge 1$.\end{proof}

Let $\PC(\R/\Z)$ be the whole group of piecewise continuous self-transformations of $\R/\Z$. Whether it has an infinite Property~T subgroup is unknown, and precisely equivalent to a well-known open question.

\begin{prop}The following (absolute) statement are equivalent:
\begin{enumerate}
\item\label{t_hor} there is an infinite Property~T subgroup in $\mathrm{Homeo}(\R)$ (equivalently: there is a nontrivial left-orderable Property~T group), asked in \cite[(7.8)]{BHV});
\item\label{t_cir} there is an infinite Property~T subgroup in $\mathrm{Homeo}(\R/\Z)$;
\item\label{t_pw} there is an infinite Property~T subgroup in $\PC(\R/\Z)$.
\end{enumerate}
\end{prop}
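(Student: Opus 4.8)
The plan is to run the cycle $(1)\Rightarrow(2)\Rightarrow(3)\Rightarrow(1)$. The first two implications are soft and I would dispatch them in a line each. For $(1)\Rightarrow(2)$: extend any action of a subgroup $G\le\mathrm{Homeo}(\R)$ to the two–point compactification $[-\infty,+\infty]$ (an increasing homeomorphism of $\R$ extends fixing both endpoints, a decreasing one swapping them), then collapse $\{-\infty,+\infty\}$ to a single point; this gives $S^1\cong\R/\Z$ and an injective homomorphism $\mathrm{Homeo}(\R)\to\mathrm{Homeo}(\R/\Z)$ (injective since the induced action is already faithful on the copy of $\R$ obtained by deleting the collapsed point). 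The parenthetical reformulation of $(1)$ is the standard fact that a countable group is left‑orderable iff it embeds in $\mathrm{Homeo}^+(\R)$, together with the observations that Property~T groups are finitely generated, hence countable, and that nontrivial left‑orderable groups are infinite. For $(2)\Rightarrow(3)$: a homeomorphism of $\R/\Z$ is piecewise continuous, and two distinct homeomorphisms disagree on a nonempty open set, hence remain distinct in $\PC(\R/\Z)$; so $\mathrm{Homeo}(\R/\Z)$ is a subgroup of $\PC(\R/\Z)$.

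The content is in $(3)\Rightarrow(1)$. Let $\Gamma\le\PC(\R/\Z)$ be infinite with Property~T, hence with Property~FW. Exactly as in the proof of Theorem~\ref{main}, but retaining only the topological structure of $X:=\R/\Z$, I would build a cofinite topological partial action of $\Gamma$ on $X$: declare $\gamma$ defined at $x$ when some representative of $\gamma$ restricts to a homeomorphism of a neighbourhood of $x$ onto an open set, with value the (unambiguous) image of $x$; the complement of the domain of $\alpha(\gamma)$ is finite, and the partial‑action axioms hold because a composite of representatives is again a representative. The universal globalization $\widehat X$ is then a (possibly non‑Hausdorff) $1$‑manifold, obtained by gluing copies of the circle $X$ along open subsets, on which $\Gamma$ acts by homeomorphisms. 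Applying Property~FW through Proposition~\ref{p_neu} yields a $\Gamma$‑invariant open $Y\subseteq\widehat X$ with $X\triangle Y$ finite such that every finite subset of $Y$ can be translated into $X$; then $Y$ is a finite union of $\Gamma$‑translates of the open subset $X\cap Y$ of $X$, so it is an open, finitely‑charted $1$‑submanifold of $\widehat X$, and it is Hausdorff because any pair of its points can be moved into the Hausdorff space $X$. The action $\Gamma\to\mathrm{Homeo}(Y)$ is faithful, since a nontrivial element of $\PC(\R/\Z)$ moves a cofinite set of points, in particular a point of $X\cap Y$ at which it is defined.

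Now $Y$ has finitely many components, each a connected Hausdorff second‑countable boundaryless $1$‑manifold, hence homeomorphic to $\R$ or to $S^1$. Passing to the finite‑index normal subgroup $\Gamma^0\le\Gamma$ that preserves every component and its orientation — again infinite with Property~T — and using the injection $\Gamma^0\hookrightarrow\prod_i\mathrm{Homeo}^+(Z_i)$ into the finite product over the components, some restriction $\Gamma^0\to\mathrm{Homeo}^+(Z)$ has infinite image $H$, which is a countable infinite group with Property~T. If $Z\cong\R$, then $H\le\mathrm{Homeo}^+(\R)\le\mathrm{Homeo}(\R)$ is an infinite Property~T subgroup and $(1)$ holds. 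If $Z\cong S^1$, consider the central extension $1\to\Z\to\widetilde H\to H\to 1$ obtained by pulling back the universal cover $1\to\Z\to\mathrm{Homeo}_{\Z}(\R)\to\mathrm{Homeo}^+(S^1)\to1$, where $\mathrm{Homeo}_{\Z}(\R)\subseteq\mathrm{Homeo}^+(\R)$ is the group of homeomorphisms commuting with integer translations: thus $\widetilde H\le\mathrm{Homeo}^+(\R)$ is infinite (and left‑orderable). If $\widetilde H$ has Property~T we are done. Otherwise, by the criterion for Property~T of central extensions in \cite{BHV} — a central $\Z$‑extension of a Property~T group has Property~T iff the image of the central $\Z$ in the abelianization is finite — the abelianization $\widetilde H^{\mathrm{ab}}$ is infinite; since $\widetilde H^{\mathrm{ab}}/\,\mathrm{image}(\Z)\cong H^{\mathrm{ab}}$ is finite, this forces a homomorphism $f\colon\widetilde H\to\Z$ whose restriction to the central $\Z$ is nonzero. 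Then $\ker f$ meets the centre trivially, so it maps isomorphically onto a subgroup $H''$ of $H$ of finite index (one checks $[H:H'']=[f(\widetilde H):f(\Z)]<\infty$), and $H''\cong\ker f\le\mathrm{Homeo}^+(\R)$ is an infinite Property~T subgroup of $\mathrm{Homeo}(\R)$; so again $(1)$ holds.

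The routine but delicate part is making the partial action and its globalization precise — a verbatim adaptation of the proof of Theorem~\ref{main} with the geometric structure stripped away. The main obstacle, however, is the circle case: producing a left‑orderable Property~T group out of an infinite Property~T subgroup of $\mathrm{Homeo}^+(S^1)$, where the essential input is the central‑extension criterion for Property~T; this is exactly the point at which the argument could not proceed were Property~T (or FW) weakened further, and it is also why the whole statement can only be proved to be \emph{equivalent} to the open problem $(1)$ rather than resolving it.
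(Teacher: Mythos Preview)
Your proof is correct and follows essentially the same approach as the paper: the regularization via the cofinite partial action and its universal globalization for $(3)\Rightarrow$, and the central-$\Z$-extension / infinite-abelianization argument from \cite{BHV} to pass from the circle to the line, are exactly the paper's ingredients. The only difference is organizational: the paper proves $(2)\Rightarrow(1)$ separately and then shows $(3)\Rightarrow\big((1)\text{ or }(2)\big)$, whereas you fold the circle case directly into a single implication $(3)\Rightarrow(1)$.
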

\begin{proof}
Since there are inclusions $\mathrm{Homeo}(\R)\to\mathrm{Homeo}(\R/\Z)\to\PC(\R/\Z)$, the implications (\ref{t_hor})$\Rightarrow$(\ref{t_cir})$\Rightarrow$(\ref{t_pw}) are obvious.

Suppose (\ref{t_cir}): $\Gamma\subset\mathrm{Homeo}^+(\R/\Z)$ is infinite with Property~T. Let $\tilde{\Gamma}$ be its inverse image in $\mathrm{Homeo}(\R)$.  If $\tilde{\Gamma}$ has Property~T, we are done. Otherwise, by \cite[Theorem 1.7.11]{BHV}, $\tilde{\Gamma}$ has infinite abelianization. Hence its derived subgroup $\tilde{\Gamma}'$ embeds as a finite index subgroup of $\Gamma$ and hence has Property~T, proving (\ref{t_hor}). 

Suppose (\ref{t_pw}). Arguing as in the proof of Theorem \ref{main} (with no geometric structure beyond being a 1-dimensional topological manifold without boundary), we obtain $\Gamma$ infinite with Property~T in the homeomorphism group of a finitely-charted Hausdorff 1-manifold. Passing to a finite index subgroup, it preserves all components, and the action on some component yields either (\ref{t_hor}) or (\ref{t_cir}).
\end{proof}

\end{document}